\documentclass[11pt]{amsart}
\usepackage{amssymb,cleveref,enumitem}
\theoremstyle{definition}
\newtheorem{tPp}{Proposition}[section]
\newtheorem{tDf}[tPp]{Definition}
\newtheorem{tEx}[tPp]{Example}
\newtheorem{tTh}[tPp]{Theorem}
\newtheorem{tRm}[tPp]{Remark}
\newtheorem{tLm}[tPp]{Lemma}
\newtheorem{tFc}[tPp]{Fact}
\newtheorem*{tMt}{Main Theorem}
\setlist[enumerate]{label*=(\arabic*)}
\renewcommand{\uppercasenonmath}[1]{}

\begin{document}
\title
{L-indistinguishability for covering groups of algebraic tori}
\author
{Yuki Nakata}
\address{Department of Mathematics, Kyoto University, Kyoto 606-8502,
Japan}
\email{nakata.24.nakata@outlook.jp}
\maketitle
\begin{abstract}
  A global packet may simultaneously contain an automorphic
  representation and a non-automorphic representation.
  The global $\mathcal S$-group is expected, and known in some cases,
  to specify the automorphic representations in each global packet.
  For a covering group of an algebraic torus, it is not obvious from
  the definition whether the analogue of a global $\mathcal S$-group
  has finite order.
  In this paper, we verify this finiteness
  for a Brylinski-Deligne covering group of a torus.
\end{abstract}
\tableofcontents

\section{Introduction}
This paper verifies the finiteness of the order of the global
$\mathcal S$-group for a 
certain covering group of
a torus over a number field $F$, which is not obvious from this
covering version of the definition.
Here, the base field $F$ is assumed to contain 
a full set $\mu_n$ of $n$-th roots of unity.

In addition to reductive algebraic groups, their covering groups
have also attracted arithmetical interest.
For example, Weil studied the Segal-Shale-Weil representations
of the 
double cover 
$\operatorname{Mp}_{2r}\to\operatorname{Sp}_{2r}$, so as to prove 
an identity now known as the Siegel-Weil 
formula~\cite{Weil_Sieg,KudRall_WeilSieg}.
Another notable instance is an Eisenstein series
on a double cover of $\operatorname{GL}_r$, which yields
a Rankin-Selberg integral representing the symmetric square L-function
of a cuspidal representation of 
$\operatorname{GL}_r$~\cite{BpG_SymS,Kab_TensXr,Take_TwSs}.
Such instances motivate us to pursue the Langlands program 
for covering groups.

In the familiar case of a reductive algebraic group $ G$ 
over a number field $F$,
we have the notion of the complex dual group $\widehat G$ of 
the group $ G$.
Then, a Langlands parameter $\phi$ valued in the L-group
$\vphantom{G}^{\mathrm L}G=\widehat G\rtimes\mathrm W_F$ defines
the centralizer 
$S_\phi=\operatorname{Cent}_{\widehat G}(\operatorname{Im}\phi)$
and the finite quotient
$\mathcal S_\phi=\left.S_\phi\middle/
S_\phi^0\operatorname Z(\widehat G)^{\Gamma_F}\right.$.
Here, $S_\phi^0$ is the connected component of the identity
in the complex reductive group $S_\phi$, and
$\operatorname Z(\widehat G)^{\Gamma_F}$
is the subgroup consisting of fixed points in the center 
$\operatorname Z(\widehat G)\subset\widehat G$
under the action of the absolute Galois group $\Gamma_F$. 
In this paper, we investigate an analogue of the group 
$\mathcal S_\phi$ for a covering group of an adele group.

For a covering group $\widetilde{T(F)}\to T(F)$ of a torus over a
local field $F$, Weissman formulated an analogue of the local
$\mathcal S$-group, which he called the packet
group~\cite{Weiss_loc_tori}.
In his formulation, the cover $\widetilde{T(F)}$ is assumed to be
a functorial one defined by Brylinski and Deligne~\cite{Bry_Del}.
The local $\mathcal S$-group for the covering group is explicitly
determined under mild assumptions~\cite{Ny_LcS,Weiss_loc_tori}.

In this paper, we formulate the global $\mathcal S$-group for a
Brylinski-Deligne covering group $\widetilde{T(\mathbb A)}\to
T(\mathbb A)$ of a torus over a number field $F$.
From this covering version of the definition, it is not obvious
whether the global $\mathcal S$-group for $\widetilde{T(\mathbb A)}$
has finite order.
Our main theorem verifies the finiteness of this order:

\begin{tMt}[\Cref{tMt}]
  Let $F$ be an algebraic number field
  containing a full set $\mu_n$ of $n$-th roots of unity,
  $\mathbb A
  $ the adele ring,
  and $T$ an algebraic torus defined over $F$.
  Let $\widetilde{T(\mathbb A)}\to T(\mathbb A)$ be a 
  Brylinski-Deligne covering group of the adelic torus.
  Then, the global $\mathcal S$-group 
  for the covering group $\widetilde{T(\mathbb A)}$ has finite order.
\end{tMt}

In the next section, we review the definition and 
the classification theorem for Brylinski-Deligne covering groups
of an algebraic torus $ T$. 
Before global discussion, 
we also review the formulation of a local packet and the local
$\mathcal S$-group for 
the covering group in \Cref{sLoc}.
Then we formulate a global packet and the global $\mathcal S$-group in
\Cref{sGlb}.
In the last section, we prove the main theorem by applying a local
statement.

\subsection*{Notation}
Let $\mathbb Z$ denote the ring of rational integers,
and $\mathbb C$ the field of complex numbers.
For a map $f\colon A\to B$ and a subset $A'\subset A$,
let $f|_{A'}$ denote the restriction, as usual.
We sometimes use this symbol to denote the pullback to $A'$, even if
the map $A'\to A$ is not an inclusion.
For a group $G$, let $\operatorname Z(G)\subset G$ denote the center.
For a field $F$, let $F^\times$ denote the multiplicative group
consisting of non-zero elements.

Throughout this paper, $F$ denotes a field.
If $F$ is a local or global field, then
we always assume that 
$F$ contains a primitive $n$-th root of unity, and 
let $\mu_n\subset F^\times$ denote the 
subgroup consisting of $n$-th roots of unity.
From \Cref{sGlb}, $F$ is a number field.
Then, at each place $v$ of $F$, let $F_v$ denote the completion, and
$\mathcal O_v\subset F_v$ for its valuation ring.
Let $\mathbb A$ denote the adele ring of 
$F$.
For a set $\Sigma$ of places, 
let $\mathbb A_\Sigma$ denote the $\Sigma$-adele ring, which fits in
a decomposition 
$\mathbb A=\mathbb A_\Sigma\times\prod_{v\in\Sigma}F_v$.
If $\Sigma$ contains all infinite places of $F$, then $\mathcal
O_\Sigma=\bigcap_{v\notin\Sigma}(F\cap\mathcal O_v)$ denotes the
ring of $\Sigma$-integers.

\section{Brylinski-Deligne covering group}
In the literature on the Langlands program for
general covering groups~\cite{GG_Weiss,Weiss_L}, 
one often focuses on certain covering groups defined by
Brylinski and Deligne~\cite{Bry_Del}.
More recently, Zhao refined them to define 
``\'etale metaplectic covers,'' 
but we do not discuss it.
Here, we just review the definition and 
the classification theorem for Brylinski-Deligne covering groups
of an algebraic torus $ T$ defined over a 
field $F$.

A Brylinski-Deligne covering group is,
roughly,
a central extension by Milnor's group $\operatorname K_2$
in algebraic K-theory, or its variation.
The groups $\operatorname{K}_2$ are 
generalized by Quillen~\cite{Qill_hi}
to be defined over schemes, and form a presheaf on the big Zariski
site of 
a base field $F$.
Let $\mathbb K_2$ be the 
sheaf associated with the presheaf $\operatorname K_2$.

\begin{tDf}[Brylinski-Deligne~\cite{Bry_Del}]
  Let $T$ 
  be an algebraic torus over a field $F$.
  Then, a Brylinski-Deligne $\mathbb K_2$-extension of 
  $T$ is a central extension
  \[1\to\mathbb{K}_2\to\widetilde{{T}}\to{T}\to 1\]
  as a sheaf of groups on the big Zariski site of 
  $\operatorname{Spec}F$.
\end{tDf}

Here, we allow this extension to be non-abelian.
Various covering groups induced from it in the following way
are called Brylinski-Deligne
covering groups.

\begin{tEx}
  \begin{enumerate}
    \item\label{e_BD_sc}
      Let $F$ be any field.
      Then, the sections
      \[
        1\to\operatorname K_2(F)\to\widetilde{{T}}(F)\to{T}
        \to 1
      \]
      of a Brylinski-Deligne $\mathbb K_2$-extension
      $\widetilde{T}\to T$
      over $\operatorname{Spec}F$ form a central extension as 
      an abstract group.
    \item\label{e_BDlc}
      Let $F$ be a local field
      containing a full set $\mu_n$ of $n$-th roots of unity.
      Then the push-out 
      \[1\to\mu_n\to\widetilde{T(F)}\to{T(F)}\to 1\]
      of the extension~\ref{e_BD_sc}
      via the Hilbert symbol $\operatorname K_2(F)\to\mu_n$ is called
      an $n$-fold \emph{Brylinski-Deligne covering group} of $T(F)$.
      Here, if $F$ is the field $\mathbb C$ of complex numbers,
      then we regard the Hilbert symbol as the trivial map.

      This covering group is indeed a topological 
      one~\cite[Construction 10.3]{Bry_Del}.
    \item
      Let $F$ be a number field
      containing a full set $\mu_n$ of $n$-th roots of unity, and
      $\mathbb A=\mathbb A_F$ the adele ring. 
      At each place $v$ of 
      $F$,~\ref{e_BDlc} gives a local 
      covering group 
      $ 
        \mu_n\to\widetilde{T(F_v)}\to{T}(F_v)
      $.

      At almost every place $v$ of 
      $F$, Brylinski and Deligne found a split monomorphism 
      $ T(\mathcal O_v)\to\widetilde{T(F_v)}$ in this cover.
      Thus, we may construct the restricted direct product 
      $\prod_v'\widetilde{T(F_v)}$ with respect to the family 
      $\{ T(\mathcal O_v)\}_v$ of compact groups.
      This restricted direct product is a central extension
      $1\to\bigoplus_v\mu_n\to\prod_v'\widetilde{T(F_v)}\to
      {T}(\mathbb A)\to 1$, and gives a covering group
      \[
        1\to\mu_n\to\widetilde{ T(\mathbb A)}\to
         T(\mathbb A)\to1
      \]
      by taking the push-out via the 
      product map $\bigoplus_v\mu_n\to\mu_n$.
      This is called an $n$-fold
      \emph{Brylinski-Deligne covering group of the adele group} 
      $ T(\mathbb A)$. 

      A Brylinski-Deligne covering group $\widetilde{T(\mathbb A)}\to
      T(\mathbb A)$ is known to split canonically over the diagonal
      embedding of $T(F)$.
      By this splitting, we regard $T(F)$ as a subgroup of the cover
      $\widetilde{T(\mathbb A)}$.
  \end{enumerate}
\end{tEx}

Brylinski and Deligne generally defined such covering groups
for any algebraic group;
a metaplectic group $\mathrm{Mp}_{2r}\to\mathrm{Sp}_{2r}$ is one of
the well-known examples.

For any torus $ T$ over a field $F$,
we have a finite Galois extension $L/F$
such that $ T$ splits over $L$.
Then the following classification theorem gives a bilinear form
$Y\times Y\to\mathbb Z$ on the cocharacter lattice,
which is an essential ingredient to 
define a packet of representations of a Brylinski-Deligne covering
group.

\begin{tTh}[Classification Theorem of 
  $\mathbb K_2$-extensions~\cite{Bry_Del}]
  \label{etBil}
  The following two Picard categories are equivalent:
  \begin{enumerate}
    \item the category of Brylinski-Deligne $\mathbb K_2$-extensions
      of the torus $ T$;

    \item the category of the following data $(Q,\mathcal{E})$:
      \begin{itemize}
        \item 
          $Q\colon Y\to\mathbb{Z}$ is a Galois-invariant quadratic 
          form on the cocharacter lattice, 
          which gives a bilinear form 
          $B\colon Y\times Y\to\mathbb{Z}$ by 
          \[(y,y')\mapsto Q(y+y')-Q(y)-Q(y').\]
        \item 
          $1\to L^\times\to\mathcal{E}\stackrel{\overline{(\ )}}{\to}Y
          \to 1$ is a Galois-equivariant central extension such that
          for any $\epsilon,\zeta\in\mathcal{E}$,
          \[
            \epsilon\zeta\epsilon^{-1}\zeta^{-1}=
            (-1)^{B(\overline{\epsilon},\overline{\zeta})}
          \]
          in $L^\times$.
          Here, the symbols $\overline\epsilon$ and $\overline\zeta$
          respectively denote the images of $\epsilon$ and $\zeta$
          under the surjection $\mathcal E\to Y$.
        \item
          For any data $(Q,\mathcal E)$ and $(Q',\mathcal E')$,
          \[
            \operatorname{Hom}((Q,\mathcal{E}),(Q',\mathcal{E}'))=
            \begin{cases}
              \emptyset &\text{if }Q\neq Q'\\
              \{\text{morphisms }\mathcal{E}\to\mathcal{E}'
              \text{ of extensions}\}
              &\text{if }Q=Q'.
            \end{cases}
          \]
          Here, morphisms $\mathcal{E}\to\mathcal{E}'$ are also
          required to be Galois-equivariant.
      \end{itemize}
  \end{enumerate}
\end{tTh}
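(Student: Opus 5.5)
The proof I have in mind follows Brylinski and Deligne's original strategy: first handle the split case, then descend along the finite Galois extension $L/F$ splitting $T$.

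\emph{Split case.} Here $T=Y\otimes\mathbb G_m$, and I would construct a functor from $\mathbb K_2$-extensions to pairs $(Q,\mathcal E)$.

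Given $\widetilde T$, define $Q$ through the commutator pairing. For cocharacters $y,y'\in Y$ and points $a,b$, the commutator of lifts of $y(a)$ and $y'(b)$ lies in $\mathbb K_2$. Since $T$ is commutative, this commutator is bimultiplicative. By a rigidity argument (Hom of $\mathbb G_m\times\mathbb G_m$ into $\mathbb K_2$ is $\mathbb Z$ via $\{a,b\}$), it equals $\{a,b\}^{B(y,y')}$ for a unique integer $B(y,y')$.

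To refine $B$ to a quadratic form $Q$, restrict $\widetilde T$ along each cocharacter $y\colon\mathbb G_m\to T$. One needs $H^0$ and extension classes of $\mathbb G_m$ by $\mathbb K_2$; the central extensions of $\mathbb G_m$ by $\mathbb K_2$ are classified by $\mathbb Z$, and $Q(y)$ is that integer. Quadraticity of $Q$, with polarization $B$, comes from the restriction along $y+y'$ through $\mathbb G_m\to\mathbb G_m^2$.

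Next, define $\mathcal E$ as the group of pairs $(y,s)$, where $s$ is a section of $\widetilde T$ over the generic point of the $\mathbb G_m$ given by $y$. Concretely, take lifts of $y(t)$ over $L(t)$ and apply the tame symbol at $t=0$, giving $\mathbb K_2(L(t))\to L^\times$. This yields an extension of $Y$ by $L^\times$. Its commutator is the tame symbol of $\{t,t\}^{B}=\{t,-1\}^B$, namely $(-1)^{B}$, which matches the required identity.

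Then I would show the functor is an equivalence by building the inverse explicitly. Choose a basis of $Y$ and an ordered bilinear form $D$ with $D(y,y)=Q(y)$. Form the cocycle $\{\,,\}^{D}$ on $T=\mathbb G_m^r$; this realizes every $Q$. Twisting by torsors, which are classified by $\operatorname{Hom}(Y,L^\times)$ extensions, realizes every $\mathcal E$. Full faithfulness reduces to computing automorphisms, i.e.\ $\operatorname{Hom}(T,\mathbb K_2)=\operatorname{Hom}(Y,\mathbb G_m)$ via the tame symbol. This uses Gersten-type results for $\mathbb K_2$ on tori, namely that $H^0(\mathbb G_m^r,\mathbb K_2)$ has the expected form.

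\emph{General case.} Both categories satisfy Galois descent from $L$ to $F$: $\mathbb K_2$-extensions form a stack on the big Zariski site and descend along the étale covering $\operatorname{Spec}L\to\operatorname{Spec}F$ via the Galois action. The data $(Q,\mathcal E)$ with Galois equivariance are exactly the descent data. The equivalence over $L$ is functorial and Galois compatible, so it descends.

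\textbf{Main obstacle.} I expect the hardest step to be the full faithfulness and essential surjectivity computation in the split case. This needs $\mathbb K_2$-cohomology of $\mathbb G_m^r$, namely $H^0(\mathbb G_m^r,\mathbb K_2)$ and $H^1$, via Gersten's conjecture, which is Quillen's theorem. I would rely on that input from \cite{Qill_hi} and on the computation in \cite{Bry_Del} rather than reprove it.
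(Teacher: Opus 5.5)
The paper gives no proof of this theorem; it quotes it from Brylinski--Deligne. Your split-case outline is their construction, with the $\mathbb K_2$-cohomology of $\mathbb G_m^r$ deferred to them: commutators give $B$, restriction along cocharacters gives $Q$, the tame symbol $\partial_0$ at $t=0$ on lifts of $y(t)$ gives $\mathcal E$ with commutator $\partial_0\{t,-1\}^{B}=(-1)^{B}$, and the cocycle $\{\,,\}^D$ realizes every $Q$. As a sketch that part is acceptable, with two small corrections. First, fullness needs that $Q(E)=Q(E')$ forces $E\cong E'$, not only the computation of automorphisms. Second, no twisting is needed in the split case: $\operatorname{Ext}^1_{\mathbb Z}(Y,F^\times)=0$, so all $\mathcal E$ with a given commutator are isomorphic.

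The genuine gap is the passage to non-split $T$. You argue that $\mathbb K_2$-extensions descend along $\operatorname{Spec}L\to\operatorname{Spec}F$ because they form a stack on the big \emph{Zariski} site. But this map is not a Zariski covering, and $\mathbb K_2$ is not an \'etale sheaf. Already $K_2(\mathbb R)\to K_2(\mathbb C)$ kills $\{-1,-1\}\neq 1$, since $\{-1,-1\}=\{i,-1\}^2=\{i,1\}=1$ in $K_2(\mathbb C)$. So neither objects nor morphisms descend formally.

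The Galois-equivariant data $(Q,\mathcal E)$ are by definition descent data for the split classification over $L$. Hence, granted the split case, the non-split theorem is essentially \emph{equivalent} to the descent property you invoke, and you have assumed the statement rather than proved it. What must actually be shown is:
(i) $\operatorname{Hom}_F(T,\mathbb K_2)\to\operatorname{Hom}(Y,L^\times)^{\operatorname{Gal}(L/F)}$ is bijective; such homomorphisms over $F$ have to be constructed, e.g.\ via the transfer $K_2(U_L)\to K_2(U)$ for induced tori.
(ii) Extensions over $F$ with isomorphic data are isomorphic.
(iii) Every Galois-invariant $Q$ and every Galois-equivariant $\mathcal E$ occurs over $F$.

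Item (iii) has content absent from the split case. Your cocycle $\{\,,\}^D$ lives over $L$, and a Galois-invariant $D$ with $D(y,y)=Q(y)$ may not exist; take $T=R_{L/F}\mathbb G_m$ with $L/F$ quadratic and $Q(x_1e_1+x_2e_2)=x_1x_2$. Moreover, for fixed $Q$ the isomorphism classes of $\mathcal E$ form a torsor under $H^1(\operatorname{Gal}(L/F),\operatorname{Hom}(Y,L^\times))$. For the norm-one torus of a quadratic $L/F$ this group is $F^\times/N_{L/F}L^\times$, so there are twisted extensions that nothing in your argument produces.

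Your tame-symbol functor is already defined over $F$: Galois acts compatibly on $\widetilde T(L(t))$, $y\mapsto y(t)$ and $\partial_0$, because $\widetilde T$ is defined over $F$. A repair can therefore keep it. But (i)--(iii), or equivalently an \'etale descent theorem for $\mathbb K_2$-extensions of tori, must then be proved from the rigidity of such extensions. That is the real content of the general case, not a formal consequence of the split one.
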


More generally,
Brylinski and Deligne proved this type of classification
theorem for $\mathbb K_2$-extensions of reductive algebraic 
groups~\cite{Bry_Del}.

\section{Local packet and $\mathcal S$-group}
\label{sLoc}
Before global discussion, 
we review Weissman's formulation~\cite{Weiss_L} of a local
packet and the $\mathcal S$-group for an $n$-fold
Brylinski-Deligne covering group $
\widetilde{T(F)}\to T(F)$ of a torus over a local field $F$ 
containing a full set $\mu_n$ of $n$-th roots of unity.
To define a local packet 
for the covering group $\widetilde{T(F)}
$,
Weissman introduced the following sublattice $Y^\#$ of the cocharacter
lattice $Y$ of the torus $T$.

In this section, let $F$ be a local field  
containing a full set $\mu_n$ of $n$-th roots of unity.

\begin{tDf}[Weissman~\cite{Weiss_loc_tori}]
  Let $\mu_n\to\widetilde{T(F)}\to T(F)$ be a
  Brylinski-Deligne covering group of a torus $T$ over $F$.
  As stated in \Cref{etBil}, the original $\mathbb K_2$-extension
  $\widetilde T\to T$ defining $\widetilde{T(F)}$ yields a quadratic
  form $Q\colon Y\to\mathbb Z$ as a first invariant, which defines a
  bilinear form $B\colon Y\times Y\to\mathbb Z$.
  Then we define a sublattice $Y^\#\subset Y$ by
  \[Y^\#=\{y\in Y\mid B(y,Y)\subset n\mathbb Z\}.\]
\end{tDf}

The inclusion $Y^\#\hookrightarrow Y$ induces an isogeny
$T^\#\stackrel\iota\to T$ of tori.
Let $\widetilde{T^\#(F)}\to T^\#(F)$ be the pullback of the $n$-fold
Brylinski-Deligne covering group $\widetilde{T(F)}\to T(F)$ via the
isogeny $\iota$.
By functoriality, 
$\widetilde{T^\#(F)}$ is also an $n$-fold Brylinski-Deligne cover, 
which is defined by the pullback $\mathbb K_2$-extension
$\widetilde{T^\#}\to T^\#$ from $\widetilde T\to T$.

Throughout this paper, a representation of a covering group is assumed
to be genuine.
To be precise, we fix an embedding 
\[j\colon\mu_n\to\mathbb{C}^\times\]
of the cyclic group of order $n$.
Recall that
a representation of a covering group $\mu_n\to\widetilde G\to G$ 
of a topological group is \emph{genuine} 
if $\mu_n\subset\widetilde{G}$ acts as 
multiplication by scalars in $\mathbb{C}$ via the embedding $j$.
That is, for any $\zeta\in\mu_n$ and any vector 
$v$ in the representation space, one has
\[\zeta\cdot v=j(\zeta)v.\]
A \emph{genuine character} 
$\chi\colon\widetilde G\to\mathbb C^\times$ is
a character that is genuine as a representation,
which means $\chi|_{\mu_n}=j$.

\begin{tDf}
  \label{tLids}
  Let $\widetilde{T(F)}\to T(F)$ be an $n$-fold
  Brylinski-Deligne covering group of a torus. 
  \begin{enumerate}
    \item
      Two irreducible representations $\pi_1$ and $\pi_2$ of 
      the covering group $\widetilde{T(F)}$ are said to be 
      \emph{L-indistinguishable} if their pullbacks to 
      $\widetilde{T^\#(F)}$ coincide:
      \[ 
      \pi_1|_{\widetilde{T^\#(F)}}=\pi_2|_{\widetilde{T^\#(F)}}.  
      \]
      By abuse of notation, we use vertical bars to indicate pullbacks,
      like restrictions.
    \item
      This L-indistinguishability of course defines an equivalence
      relation, and each equivalence class is called a \emph{local
      packet}, or \emph{pouch}~\cite{Weiss_L}.
  \end{enumerate}
\end{tDf}

\begin{tRm}
  In \Cref{tLids}, the pullbacks $\pi_i|_{\widetilde{T^\#(F)}}$
  acts as a character on the representation space, for $i=1,2$.
  Indeed, the image $\iota(\widetilde{T^\#(F)})$ in the 
  cover $\widetilde{T(F)}$ is known to be central~\cite[Theorem 1.3]
  {Weiss_LG_tori}, i.e., included in the center $\operatorname
  Z(\widetilde{T(F)})$.
  Thus $\pi_i|_{\widetilde{T^\#(F)}}$ acts as the pullback of the
  central character of $\pi_i$.
\end{tRm}
Note that the image $\iota({T^\#(F)})$ has finite index in ${T(F)}$.
Hence the image $\iota(\widetilde{T^\#(F)})$ of the cover also has
finite index in $\widetilde{T(F)}$, and especially in the center 
$\operatorname Z(\widetilde{T(F)})$.

\begin{tDf}
  Let $\widetilde{T(F)}\to T(F)$ be an $n$-fold
  Brylinski-Deligne covering group. 
  Then we define the finite abelian group 
  $
  \mathcal S_{\widetilde{T(F)}}$ 
  as the quotient
  \[\mathcal S_{\widetilde{T(F)}}=
  \left.\operatorname Z(\widetilde{T(F)})\middle/
  \iota(\widetilde{T^\#(F)})\right.
  .\]
  Weissman called this the \emph{packet group}~\cite{Weiss_loc_tori}.
\end{tDf}

\begin{tPp}
  Let $\Pi$ be a local packet of representations of the covering group
  $\widetilde{T(F)} $.
  Then, fixing a representation $\pi_0$ in a local packet $\Pi$ yields
  a 
  bijection 
  \[
    \Pi\stackrel{1:1}\to
    \operatorname{Hom}(\mathcal S_{\widetilde{T(F)}},\mathbb C^\times)
    ,
  \]
  which we write $\pi\mapsto\langle\quad,\pi\rangle$.
\end{tPp}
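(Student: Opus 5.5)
The plan is to reduce the statement to the Stone--von Neumann type classification of genuine irreducible representations of the Heisenberg-type group $\widetilde{T(F)}$, and then to compare central characters. Write $Z=\operatorname Z(\widetilde{T(F)})$ and $Z^\#=\iota(\widetilde{T^\#(F)})$. By the Remark above, $Z^\#\subset Z$ has finite index. Every irreducible genuine representation $\pi$ of $\widetilde{T(F)}$ has a central character $\omega_\pi\colon Z\to\mathbb C^\times$, and the pullback $\pi|_{\widetilde{T^\#(F)}}$ is the pullback of $\omega_\pi|_{Z^\#}$.

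The first step is the key input: the map $\pi\mapsto\omega_\pi$ is a bijection from the set of isomorphism classes of irreducible genuine representations of $\widetilde{T(F)}$ onto the set of genuine characters of $Z$. This is the Stone--von Neumann theorem for the covering torus. Since $\widetilde{T(F)}$ is a central extension of the abelian group $T(F)$, its commutator pairing $T(F)\times T(F)\to\mu_n$ descends to a nondegenerate pairing on the finite group $\widetilde{T(F)}/Z$. Choosing a maximal abelian subgroup $A\supset Z$ and inducing an extension of a genuine character of $Z$ gives the unique irreducible representation with that central character. This fact is the one established in Weissman's work on covering tori, and I would cite it rather than reprove it.

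The second step uses the injectivity half of this correspondence to rewrite the equivalence relation. Two representations $\pi_1,\pi_2$ are L-indistinguishable exactly when $\omega_{\pi_1}$ and $\omega_{\pi_2}$ agree on $Z^\#$. The map $\widetilde{T^\#(F)}\to Z^\#$ is surjective, so equality of the pullbacks is the same as equality of the restrictions to $Z^\#$.

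The third step builds the bijection. Fix $\pi_0\in\Pi$. For $\pi\in\Pi$, the quotient $\omega_\pi\omega_{\pi_0}^{-1}$ is a character of $Z$. It is trivial on $\mu_n$, because both factors are genuine, and trivial on $Z^\#$, by the second step. Hence it factors through a character $\langle\ ,\pi\rangle$ of $\mathcal S_{\widetilde{T(F)}}=Z/Z^\#$.
\begin{itemize}
\item \emph{Injectivity:} if two representations give the same character, their central characters coincide, so they are isomorphic by the first step.
\item \emph{Surjectivity:} given $\lambda\in\operatorname{Hom}(\mathcal S_{\widetilde{T(F)}},\mathbb C^\times)$, the character $\omega_{\pi_0}\cdot\lambda$ is genuine on $Z$. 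By the first step it is the central character of some irreducible $\pi$, and $\pi$ agrees with $\pi_0$ on $Z^\#$, so $\pi\in\Pi$.
\end{itemize}

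The main obstacle is the first step. If the paper's conventions allow representations that are not smooth, or $F=\mathbb C$ where the cover splits, this needs care. For $F=\mathbb C$ the covering is trivial on the connected group, $Z$ is everything, and the classification reduces to characters. I expect to simply invoke the known Stone--von Neumann result for Brylinski--Deligne covers of tori, noting that $Z$ has finite index because it contains $Z^\#$.
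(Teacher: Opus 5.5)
Your proposal is correct and follows essentially the same route as the paper. Both invoke Weissman's Stone--von Neumann analogue to identify $\Pi$ with the set of genuine characters of $\operatorname Z(\widetilde{T(F)})$ that agree with $\chi_0$ on $\iota(\widetilde{T^\#(F)})$, and then send $\chi\mapsto\chi\chi_0^{-1}$ to land bijectively in $\operatorname{Hom}(\mathcal S_{\widetilde{T(F)}},\mathbb C^\times)$; your explicit injectivity and surjectivity checks just spell out what the paper states in one line, and the rewriting of L-indistinguishability in your second step needs only the Remark that the pullback acts by $\omega_\pi\circ\iota$, not the injectivity half of the correspondence.
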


\begin{proof}
  Let $\omega(\Pi)$ be the set of central characters 
  $\operatorname Z(\widetilde{T(F)})\to\mathbb C^\times$ 
  of representations belonging to $\Pi$.
  Recall that any representation of the covering group 
  $\widetilde{T(F)}$ is assumed to be genuine.

  By an analogy~\cite{Weiss_loc_tori} of the Stone-von Neumann theorem
  for 
  $\widetilde{T(F)}$, the surjection $\Pi\to\omega(\Pi)$ is
  indeed bijective.
  Further, if $\chi_0$ denotes the central character of $\pi_0$, then
  $\omega(\Pi)$ coincides with the set of genuine characters $\chi$ of
  the center $\operatorname Z(\widetilde{T(F)})$ such that the
  restrictions $\chi|_{\iota(\widetilde{T^\#(F)})}$ are equal to the
  restriction $\chi_0|_{\iota(\widetilde{T^\#(F)})}$.
  Then the map 
  $\omega(\Pi)\to
  \operatorname{Hom}(\mathcal S_{\widetilde{T(F)}},\mathbb C^\times)$
  defined by $\chi\mapsto\chi\chi_0^{-1}$ is bijective, so is the
  composite
  $\Pi\to
  \operatorname{Hom}(\mathcal S_{\widetilde{T(F)}},\mathbb C^\times)$.
\end{proof}

\section{Global packet and $\mathcal S$-group}
\label{sGlb}
In a familiar way,
we define a global packet for a Brylinski-Deligne covering group
$\widetilde{T(\mathbb A)}\to T(\mathbb A)$ of a torus over
a number field $F$ 
containing a full set $\mu_n$ of $n$-th roots of unity.
Here, $\mathbb A=\mathbb A_F$ denotes the adele ring of $F$.
Then we formulate the global $\mathcal S$-group for the covering group
$\widetilde{T(\mathbb A)}$.

From this section, $F$ denotes a number field  
containing a full set $\mu_n$ of $n$-th roots of unity.

\begin{tDf}
  Let $\widetilde{T(\mathbb A)}\to T(\mathbb A)$ be an $n$-fold
  Brylinski-Deligne covering group of a torus.
  Then, we define two irreducible unitary representations
  $\pi=\bigotimes_v'\pi_v$ and $\pi'=\bigotimes_v'\pi'_v$ of 
  $\widetilde{T(\mathbb A)}$ to be \emph{L-indistinguishable} 
  if $\pi_v$ and $\pi'_v$ are L-indistinguishable at any place $v$
  and equivalent at almost every place $v$.
  A \emph{global packet}, or \emph{pouch}, is an equivalence class
  for this relation.
\end{tDf}

To define the global $\mathcal S$-group, we examine the center
$\operatorname Z(\widetilde{T(\mathbb A)})$ of a Brylinski-Deligne
covering group $\widetilde{T(\mathbb A)}\to T(\mathbb A)$.
By definition, the image $\operatorname Z^\dagger(\mathbb A)\subset
T(\mathbb A)$ of the center $\operatorname Z(\widetilde{T(\mathbb A)})
$ is the restricted direct product of the images $\operatorname
Z^\dagger(F_v)\subset T(F_v)$ of the centers $\operatorname
Z(\widetilde{T(F_v)})$, with respect to the family $\{\operatorname
Z^\dagger(F_v)\cap T(\mathcal O_v)\}_v$ of intersections.
Then, the local inclusions 
$\operatorname Z^\dagger(F_v) \supset\iota(T^\#(F_v))$ 
imply the global inclusion
$\operatorname Z^\dagger(\mathbb A) \supset\iota(T^\#(\mathbb A))$,
which verifies the inclusion
$\operatorname Z(\widetilde{T(\mathbb A)})
\supset\iota(\widetilde{T^\#(\mathbb A)})$ between covers.

Then we may define the global $\mathcal S$-group in a way similar to
the local case.
\begin{tDf}
  Let $\widetilde{T(\mathbb A)}\to T(\mathbb A)$ be a 
  Brylinski-Deligne covering group of a torus.
  Then, we define the abelian group 
  $\mathcal S_{\widetilde{T(\mathbb A)}}$ as the quotient
  \begin{align*}
    \mathcal S_{\widetilde{T(\mathbb A)}}
    &=\left.\operatorname Z(\widetilde{T(\mathbb A)})\cap T(F)
    \middle/ \iota(\widetilde{T^\#(\mathbb A)})\cap T(F)\right.\\
    &\cong\left.\operatorname Z^\dagger(\mathbb A)\cap T(F)
    \middle/ \iota({T^\#(\mathbb A)})\cap T(F)\right..
  \end{align*}
  Here, we regard $T(F)$ as a subgroup of the cover
  $\widetilde{T(\mathbb A)}$ by the canonical splitting over the
  diagonal embedding $T(F)\hookrightarrow T(\mathbb A)$.
\end{tDf}

At each place $v$ of $F$, the group 
$\operatorname Z(\widetilde{T(\mathbb A)})\cap T(F)$ naturally embeds
in $\operatorname Z(\widetilde{T(F_v)})$, and 
$\iota(\widetilde{T^\#(\mathbb A)})\cap T(F)$ in
$\iota(\widetilde{T^\#(F_v)})$.
These embeddings define a map 
$\mathcal S_{\widetilde{T(\mathbb A)}}\to
\mathcal S_{\widetilde{T(F_v)}}$, denoted by $s\mapsto s_v$.

We suppose that a global packet $\Pi$ contains an automorphic 
representation $\pi^0$ of $\widetilde{T(\mathbb A)}$.
Here, other representation in $\Pi$ may not be automorphic.
We describe the condition for automorphy 
in terms of the global $\mathcal S$-group as follows.

To define the local characters 
$\langle\quad,\pi_v\rangle\colon\mathcal S_{\widetilde{T(F_v)}}\to
\mathbb C^\times$, we chose one of the representations 
in the local packet. 
Instead, we now fix one automorphic representation 
$\pi^0=\bigotimes'_v\pi^0_v$ in the global packet $\Pi$.
At each place $v$, we use the local component $\pi^0_v$ to define
the characters $\langle\quad,\pi_v\rangle$.

\begin{tLm}
  Let $\Pi$ be a global packet with a fixed automorphic representation
  $\pi^0\in\Pi$.
  Take any $s\in\mathcal S_{\widetilde{T(\mathbb A)}}$ and
  $\pi\in\Pi$.
  Then we have $\langle s_v,\pi_v\rangle=1$ for almost every place
  $v$.
\end{tLm}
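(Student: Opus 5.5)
The plan is to use the definition of the global packet directly. Since $\pi$ and $\pi^0$ lie in the same global packet $\Pi$, there is a finite set $\Sigma$ of places outside of which $\pi_v\cong\pi^0_v$. At each place $v$, the character $\langle\quad,\pi_v\rangle$ on $\mathcal S_{\widetilde{T(F_v)}}$ is defined by the bijection of the preceding proposition, taking $\pi^0_v$ as the base point of the local packet. Unwinding that proof, if $\chi_v$ and $\chi^0_v$ denote the central characters of $\pi_v$ and $\pi^0_v$, then $\langle\quad,\pi_v\rangle$ is the character $\chi_v(\chi^0_v)^{-1}$ of $\operatorname Z(\widetilde{T(F_v)})$. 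This character is trivial on $\iota(\widetilde{T^\#(F_v)})$ because $\pi_v$ and $\pi^0_v$ are L-indistinguishable, so it descends to $\mathcal S_{\widetilde{T(F_v)}}$.

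The key step is that for $v\notin\Sigma$ the isomorphism $\pi_v\cong\pi^0_v$ forces $\chi_v=\chi^0_v$. Hence $\langle\quad,\pi_v\rangle$ is the trivial character of $\mathcal S_{\widetilde{T(F_v)}}$, and in particular $\langle s_v,\pi_v\rangle=1$ for every $s\in\mathcal S_{\widetilde{T(\mathbb A)}}$. This gives the claim with the exceptional set contained in the finite set $\Sigma$, which is independent of $s$.

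The point needing care is that the pairing is well defined, so that "trivial character" really means $\langle s_v,\pi_v\rangle=1$. I would verify two things. First, $s_v$ is obtained by taking a representative $t\in\operatorname Z(\widetilde{T(\mathbb A)})\cap T(F)$, viewing it in $\operatorname Z(\widetilde{T(F_v)})$, and reducing modulo $\iota(\widetilde{T^\#(F_v)})$; this is compatible because $\iota(\widetilde{T^\#(\mathbb A)})\cap T(F)$ lands in $\iota(\widetilde{T^\#(F_v)})$. Second, $\langle s_v,\pi_v\rangle=\chi_v(t_v)\chi^0_v(t_v)^{-1}$ does not depend on the choice of $t$, since the quotient character kills $\iota(\widetilde{T^\#(F_v)})$.

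I do not expect a real obstacle. At most one must check that the embedding of $T(F)$ into $\widetilde{T(F_v)}$, which comes from the global splitting, is used consistently for both $\chi_v$ and $\chi^0_v$. This is harmless because the same element $t_v$ appears in both factors, so any ambiguity in the local lift cancels in the ratio.
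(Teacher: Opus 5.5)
Your proof is correct, but it follows a different route from the paper's.

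\textbf{Your route.} You use the clause in the definition of a global packet that $\pi_v\cong\pi^0_v$ for almost every $v$. At those places $\chi_v=\chi^0_v$, so $\langle\quad,\pi_v\rangle$ is the trivial character of $\mathcal S_{\widetilde{T(F_v)}}$. The exceptional set, namely the places where $\pi_v\not\cong\pi^0_v$, does not depend on $s$.

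\textbf{The paper's route.} The paper never invokes that clause. It fixes a representative $\dot s\in\operatorname Z(\widetilde{T(\mathbb A)})\cap T(F)$ and notes that $\dot s_v\in\operatorname Z(\widetilde{T(F_v)})\cap T(\mathcal O_v)$ for almost every $v$, because $\dot s$ is an adele; here $T(\mathcal O_v)$ is viewed inside $\widetilde{T(F_v)}$ via the canonical splitting. It then uses that the central characters $\chi_v$ and $\chi^0_v$ of the restricted tensor products $\pi$ and $\pi^0$ are unramified at almost every $v$, i.e.\ trivial on $\operatorname Z(\widetilde{T(F_v)})\cap T(\mathcal O_v)$.

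\textbf{What each buys.} Your argument is shorter, needs neither the integral splittings nor unramifiedness, and gives an exceptional set that is uniform in $s$. The paper's argument has an exceptional set depending on $s$. However, it only requires $\pi_v$ and $\pi^0_v$ to lie in the same local packet at every place. It would therefore still make $\prod_v\langle s_v,\pi_v\rangle$ well defined if the ``equivalent at almost every place'' condition were dropped from the definition of a global packet.
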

\begin{proof}
  Let $\dot s\in\operatorname Z(\widetilde{T(A)})\cap T(F)$ be a
  representative of $s\in\mathcal S_{\widetilde{T(\mathbb A)}}$.
  Then for almost every place $v$ of $F$, the $v$-component $\dot s_v$
  of $\dot s$ belongs to $\operatorname Z(\widetilde{T(F_v)})\cap
  T(\mathcal O_v)$.
  Recall that for almost every $v$, the cover $\widetilde{T(F_v)}$
  canonically splits over $T(\mathcal O_v)$, which allow us to regard
  $T(\mathcal O_v)$ as a subgroup of $\widetilde{T(F_v)}$.

  Let $\chi$ be the central character $\operatorname
  Z(\widetilde{T(\mathbb A)})\to\mathbb C^\times$ of the
  representation $\pi$, and $\chi^0$ that of $\pi^0$.
  Then for almost every place $v$, the $v$-components $\chi_v$ and
  $\chi^0_v$ are trivial on $\operatorname Z(\widetilde{T(F_v)})\cap
  T(\mathcal O_v)$.
  Note that $\chi_v$ is the central character of the $v$-component
  $\pi_v$, and similarly for the character $\chi^0_v$.
  Therefore, for almost every place $v$, the value 
  $\langle s_v,\pi_v\rangle=\chi_v(\chi^0_v)^{-1}(s_v)$ is trivial.
\end{proof}

Hence we may define 
$\langle s,\pi\rangle=\prod_v\langle s_v,\pi_v\rangle$ for 
$s\in\mathcal S_{\widetilde{T(\mathbb A)}}$ and $\pi\in\Pi$.
\begin{tPp}
  Let $\Pi$ be a global packet with a fixed automorphic representation
  $\pi^0\in\Pi$.
  \begin{enumerate}
    \item 
      Let $s\in\mathcal S_{\widetilde{T(\mathbb A)}}$ and $\pi\in\Pi$.
      Then, the value $\langle s,\pi\rangle$ is independent of 
      the choice of a representation $\pi^0$.
    \item The pairing $\langle\quad{,}\quad\rangle$ induces 
      a surjection of $\Pi$ onto the dual group of 
      $\mathcal S_{\widetilde{T(\mathbb A)}}$.
  \end{enumerate}
\end{tPp}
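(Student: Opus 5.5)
Both parts come from one reformulation. For $\pi\in\Pi$, let $\chi$ be its central character on $\operatorname Z(\widetilde{T(\mathbb A)})$ and $\chi^0$ that of $\pi^0$. By the local bijection, $\langle\ ,\pi_v\rangle$ is the character $\chi_v(\chi^0_v)^{-1}$ of $\operatorname Z(\widetilde{T(F_v)})$. Hence, for a representative $\dot s\in\operatorname Z(\widetilde{T(\mathbb A)})\cap T(F)$ of $s$,
\[
\langle s,\pi\rangle=\prod_v\chi_v(\chi^0_v)^{-1}(\dot s_v)=\chi(\dot s)\,\chi^0(\dot s)^{-1}.
\]
The product is finite by the preceding Lemma. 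The last equality holds because the central characters factor through the restricted tensor product. This also uses that $\chi^0_v$ and $\chi_v$ are genuine, so the push-out $\bigoplus_v\mu_n\to\mu_n$ is compatible with the product.

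\emph{Part (1).} Let $\pi^1$ be another automorphic member of $\Pi$ with central character $\chi^1$. The ratio of the two pairings is $\chi^1(\dot s)\chi^0(\dot s)^{-1}$, so it suffices to show $\chi^1=\chi^0$ on $\operatorname Z(\widetilde{T(\mathbb A)})\cap T(F)$.

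First, automorphy of $\pi^j$ means its central character is trivial on $\operatorname Z(\widetilde{T(\mathbb A)})\cap T(F)$. To see this, note that $T(F)$ acts trivially by left translation on automorphic forms, while central elements act by the scalar $\chi^j$. So $\chi^j(\dot s)=1$ for both $j=0,1$, and the ratio is $1$.

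It follows that $\langle s,\pi\rangle=\chi(\dot s)$ for any choice of automorphic $\pi^0$. Independence of the representative $\dot s$ also needs checking. Since $\pi,\pi^0$ are L-indistinguishable at every place, $\chi$ and $\chi^0$ agree on $\iota(\widetilde{T^\#(\mathbb A)})$. So $\chi(\chi^0)^{-1}$ is trivial on $\iota(\widetilde{T^\#(\mathbb A)})\cap T(F)$, and the pairing is well defined on $\mathcal S_{\widetilde{T(\mathbb A)}}$.

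\emph{Part (2).} Let $\lambda$ be a character of $\mathcal S_{\widetilde{T(\mathbb A)}}$. We want $\pi\in\Pi$ with $\chi(\chi^0)^{-1}$ restricting to $\lambda$ on $\operatorname Z(\widetilde{T(\mathbb A)})\cap T(F)$. Set
\[
A=\operatorname Z^\dagger(\mathbb A),\qquad B=\iota(T^\#(\mathbb A)),\qquad \Gamma=A\cap T(F).
\]
Genuine characters $\chi$ of $\operatorname Z(\widetilde{T(\mathbb A)})$ agreeing with $\chi^0$ on $\iota(\widetilde{T^\#(\mathbb A)})$ correspond, via $\chi\mapsto\chi(\chi^0)^{-1}$, to unitary characters $\eta$ of $A/B$.

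These characters should also satisfy $\eta_v=1$ on $\operatorname Z^\dagger(F_v)\cap T(\mathcal O_v)$ for almost all $v$. By the global Stone–von Neumann analogue, each such $\chi$ is the central character of an irreducible genuine $\pi$ with $\pi_v$ L-indistinguishable from $\pi^0_v$ everywhere and isomorphic to $\pi^0_v$ almost everywhere. Hence $\pi\in\Pi$. So the task reduces to extending $\lambda$, viewed on $\Gamma/(\Gamma\cap B)=\Gamma B/B$, to a character $\eta$ of $A/B$.

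Choose a finite set $\Sigma$ of places outside of which $\lambda$ can be arranged unramified. Consider the open subgroup
\[
U=\prod_{v\notin\Sigma}\bigl(\operatorname Z^\dagger(F_v)\cap T(\mathcal O_v)\bigr)\times\prod_{v\in\Sigma}\operatorname Z^\dagger(F_v)
\]
and the subgroup $\Gamma B U'$, where $U'$ is the unramified part. Since $A/B$ is a locally compact abelian group and $\Gamma B$ is a subgroup, Pontryagin duality lets a unitary character extend from a closed subgroup. The main obstacle is this extension step with the unramified condition and continuity. Specifically, one must show:
\begin{enumerate}
\item the image of $\Gamma$ in $A/B$ is closed (it is discrete, since $T(F)$ is discrete in $T(\mathbb A)$);
\item $\lambda$ extended trivially on $B\prod_{v\notin\Sigma}(\operatorname Z^\dagger(F_v)\cap T(\mathcal O_v))$ is well defined, i.e.\ $\lambda$ is trivial on $\Gamma\cap B\prod_{v\notin\Sigma}(\cdots)$.
\end{enumerate}
For (2), the plan is to enlarge $\Sigma$ using finiteness of class-group-type quotients, so that such elements of $\Gamma$ lie in $\Gamma\cap B$ up to a finite group on which $\lambda$ can be controlled. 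After that, extend to $A/B$ by duality and take $\chi=\eta\chi^0$.
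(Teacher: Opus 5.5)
Your part (1) is correct and is the paper's argument. Automorphy of $\pi^0$ makes $\chi^0$ trivial on $\operatorname Z(\widetilde{T(\mathbb A)})\cap T(F)$, so $\langle s,\pi\rangle=\chi(\dot s)$.

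Part (2) has a genuine gap, located exactly at the two items you isolate.

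\emph{Item (1).} The reason you give does not work. Discreteness of $\Gamma$ in $A$ is not inherited by its image in $A/B$, because $B=\iota(T^\#(\mathbb A))$ is not compact. In fact $A/B$ is \emph{compact}. Each $\operatorname Z^\dagger(F_v)/\iota(T^\#(F_v))$ is finite. For almost all $v$, \Cref{tModO} gives $\operatorname Z^\dagger(F_v)=\iota(T^\#(F_v))\bigl(\operatorname Z^\dagger(F_v)\cap T(\mathcal O_v)\bigr)$. Hence $A/B$ is a quotient of a finite group times a compact group. A discrete subgroup of a compact group is finite, and so is a closed countable one. Therefore the image $\Gamma B/B\cong\mathcal S_{\widetilde{T(\mathbb A)}}$ is closed, or discrete, if and only if $\mathcal S_{\widetilde{T(\mathbb A)}}$ is finite.

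\emph{Item (2).} This is the same statement. Because $\lambda$ is an arbitrary character, nothing about it can be ``controlled.'' What you need is $\Gamma\cap B\prod_{v\notin\Sigma}\bigl(\operatorname Z^\dagger(F_v)\cap T(\mathcal O_v)\bigr)\subset B$ for some $\Sigma$. These subgroups form a neighbourhood basis of $1$ in $A/B$, so this is again discreteness of $\Gamma B/B$.

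So what remains is precisely the paper's Main Theorem. Your sketch via ``class-group-type quotients'' does not prove it. In particular, it never uses the local inclusion of \Cref{tModO}, which together with \Cref{tCln} and \Cref{tFg} is what makes the finiteness true.

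The gap can be closed without circularity, since \Cref{tMt} is proved independently of this proposition. Granting finiteness, item (2) holds for $\Sigma$ large. Then $\lambda$, extended by $1$ on $B\prod_{v\notin\Sigma}(\cdots)$, is a character of the open finite-index subgroup $\Gamma B\prod_{v\notin\Sigma}(\cdots)/B$. It extends to $A/B$ by divisibility of $\mathbb C^\times$, and continuity is automatic. You also need \Cref{tModO} once more to upgrade $\eta_v=1$ on $\operatorname Z^\dagger(F_v)\cap T(\mathcal O_v)$ to $\eta_v=1$ on all of $\operatorname Z^\dagger(F_v)$. That upgrade is what gives $\pi_v\cong\pi^0_v$ for almost all $v$, so that $\pi\in\Pi$.

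For comparison, the paper's proof of (2) is purely algebraic. It identifies $\omega(\Pi)$ with all genuine characters of the center agreeing with $\chi^0$ on $\iota(\widetilde{T^\#(\mathbb A)})$. It then extends characters from the subgroup $\mathcal S_{\widetilde{T(\mathbb A)}}$ of $\operatorname Z(\widetilde{T(\mathbb A)})/\iota(\widetilde{T^\#(\mathbb A)})$ using only divisibility of $\mathbb C^\times$. It does not discuss continuity or the almost-everywhere condition. Your insistence on those conditions is legitimate, but once you impose them, finiteness of $\mathcal S_{\widetilde{T(\mathbb A)}}$ is exactly the missing input.
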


\begin{proof}
  Let $\omega(\Pi)$ be the set of central characters 
  $\operatorname Z(\widetilde{T(\mathbb A)})\to\mathbb C^\times$ 
  of representations belonging to $\Pi$.
  By an analogy~\cite[Theorem 4.5]{Weiss_LG_tori} of the Stone-von
  Neumann theorem for the covering group $\widetilde{T(\mathbb A)}$,
  the surjection \[\Pi\to\omega(\Pi)\] is indeed bijective.
  Further, if $\chi^0$ denotes the central character of $\pi^0$, then
  $\omega(\Pi)$ coincides with the set of genuine characters $\chi$ of
  the center $\operatorname Z(\widetilde{T(\mathbb A)})$ such that the
  restrictions $\chi|_{\iota(\widetilde{T^\#(\mathbb A)})}$ are equal
  to the restriction $\chi^0|_{\iota(\widetilde{T^\#(\mathbb A)})}$.

  Then the map 
  \[
    \omega(\Pi)\to
    \operatorname{Hom}\left(\operatorname Z(\widetilde{T(\mathbb A)})
    /\iota(\widetilde{T^\#(\mathbb A)}),\mathbb C^\times\right)
  \]
  defined by $\chi\mapsto\chi(\chi^0)^{-1}$ is bijective.
  Note that the canonical map 
  $\mathcal S_{\widetilde{T(\mathbb A)}}
  \to\operatorname Z(\widetilde{T(\mathbb A)})
  /\iota(\widetilde{T^\#(\mathbb A)})$
  is injective.
  Then the composition
  \[
    \omega(\Pi)\to
    \operatorname{Hom}\left(\operatorname Z(\widetilde{T(\mathbb A)})
    /\iota(\widetilde{T^\#(\mathbb A)}),\mathbb C^\times\right)
    \to\operatorname{Hom}(\mathcal S_{\widetilde{T(\mathbb A)}},
    \mathbb C^\times)
  \]
  is a surjection, mapping $\chi$ to 
  $\chi(\chi^0)^{-1}
  |_{\operatorname Z(\widetilde{T(\mathbb A)})\cap T(F)}
  =\chi|_{\operatorname Z(\widetilde{T(\mathbb A)})\cap T(F)}$

  Therefore the composite 
  $\Pi\to\omega(\Pi)\to
  \operatorname{Hom}(\mathcal S_{\widetilde{T(\mathbb A)}},
  \mathbb C^\times)$
  is surjective and independent of the choice of a representation
  $\pi^0$.
\end{proof}

Moreover:
\begin{tPp}
  Let $\Pi$ be a global packet containing at least one automorphic
  representation.
  Then, a unitary representation $\pi\in\Pi$ is automorphic
  if and only if $\langle\quad,\pi\rangle$ is the trivial character
  of $\mathcal S_{\widetilde{T(\mathbb A)}}$.
\end{tPp}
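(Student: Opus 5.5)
We reduce automorphy to a condition on central characters. The plan uses the Stone--von Neumann analogue for $\widetilde{T(\mathbb A)}$ already invoked above. By \cite[Theorem 4.5]{Weiss_LG_tori}, an irreducible genuine representation of $\widetilde{T(\mathbb A)}$ is determined by its central character. Moreover, the automorphic ones are exactly those whose central character $\chi$ is trivial on $\operatorname Z(\widetilde{T(\mathbb A)})\cap T(F)$. The reason is that the automorphic spectrum is a space of genuine functions on $T(F)\backslash\widetilde{T(\mathbb A)}$, and $\pi$ occurs there precisely when its central character extends, through the maximal abelian subgroup $\operatorname Z(\widetilde{T(\mathbb A)})T(F)$, to a character trivial on $T(F)$. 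This is possible if and only if $\chi$ is trivial on $\operatorname Z(\widetilde{T(\mathbb A)})\cap T(F)$. I would quote this criterion from Weissman's global theory, and re-derive it if needed by inducing from $\operatorname Z(\widetilde{T(\mathbb A)})T(F)$.

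Granting this, the proof is short. Let $\chi$ and $\chi^0$ be the central characters of $\pi$ and $\pi^0$. For $s\in\mathcal S_{\widetilde{T(\mathbb A)}}$ with representative $\dot s\in\operatorname Z(\widetilde{T(\mathbb A)})\cap T(F)$, we have
\[
\langle s,\pi\rangle=\prod_v\chi_v(\chi^0_v)^{-1}(\dot s_v)=\chi(\chi^0)^{-1}(\dot s).
\]
This uses that the central character of the restricted tensor product is the product of its local components. The product is finite in effect by the preceding Lemma.

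Since $\pi^0$ is automorphic, $\chi^0$ is trivial on $\operatorname Z(\widetilde{T(\mathbb A)})\cap T(F)$, so $\langle s,\pi\rangle=\chi(\dot s)$, matching the last display in the previous proof. Hence $\langle\ ,\pi\rangle$ is trivial exactly when $\chi$ is trivial on $\operatorname Z(\widetilde{T(\mathbb A)})\cap T(F)$. Note that $\chi$ is already trivial on $\iota(\widetilde{T^\#(\mathbb A)})\cap T(F)$, because there it agrees with $\chi^0$. By the criterion, this triviality is equivalent to automorphy of $\pi$.

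The main obstacle is the automorphy criterion in the first paragraph. It requires knowing that the relevant automorphic realization is discrete and multiplicity-free for $\widetilde{T(\mathbb A)}$. It also requires that genuine characters of the abelian group $\operatorname Z(\widetilde{T(\mathbb A)})T(F)$ are compatible, that is, that $T(F)$ commutes with the center inside the cover, which holds since the center is central. One must also handle unitarity and the closedness of $\operatorname Z(\widetilde{T(\mathbb A)})T(F)$, so that characters extend by Pontryagin duality.
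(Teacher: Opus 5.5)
Your argument is correct and is essentially the paper's proof: via Weissman's global Stone--von Neumann theory, automorphy of $\pi$ reduces to triviality of its central character $\chi$ on $\operatorname Z(\widetilde{T(\mathbb A)})\cap T(F)$, and $\langle s,\pi\rangle=\chi(\chi^0)^{-1}(\dot s)=\chi(\dot s)$ because $\pi^0$ is automorphic. You should drop the claims that $\operatorname Z(\widetilde{T(\mathbb A)})T(F)$ is maximal abelian and that multiplicity one is needed: neither is justified or required, because the paper (and your induction of $\chi\otimes 1$ from $\operatorname Z(\widetilde{T(\mathbb A)})T(F)$) only uses that the automorphic forms with central character $\chi$ form a nonzero sum of copies of $\pi$.
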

\begin{proof}
  If $\pi$ is an automorphic representation, then the central
  character $\chi\colon\operatorname Z(\widetilde{T(\mathbb A)})
  \to\mathbb C^\times$ is trivial on $\operatorname
  Z(\widetilde{T(\mathbb A)})\cap T(F)$.
  Conversely, if the central character $\chi$ of a unitary
  representation $\pi\in\Pi$ is trivial on
  $\operatorname Z(\widetilde{T(\mathbb A)})\cap T(F)$, then $\pi$ is
  automorphic, since the space of automorphic forms on
  $\widetilde{T(\mathbb A)}$ with central character $\chi$ decomposes
  into a sum of the copies of $\pi$~\cite{Weiss_LG_tori}.
  Finally, note that the triviality of the central character $\chi$ on
  $\operatorname Z(\widetilde{T(\mathbb A)})\cap T(F)$ just means the
  triviality on $\mathcal S_{\widetilde{T(\mathbb A)}}$.
\end{proof}

\section{Proof of Main Theorem}
Our main theorem claims that the abelian group
\[
  \mathcal S_{\widetilde{T(\mathbb A)}}=
  \left.(\operatorname{Z}^\dagger(\mathbb A)\cap T(F))
  \middle/(\iota( T^\#(\mathbb A))\cap T(F))\right.
\]
has finite order.
Recall that this group determines the automorphic representations in a
global packet for an $n$-fold Brylinski-Deligne covering group
$\widetilde{T(\mathbb A)}\to T(\mathbb A)$ of a torus over a number
field $F$ containing a full set $\mu_n$ of $n$-th roots of unity.
We prove the main theorem by reducing it to the following finiteness
statement.

\begin{tLm}
  \label{tFg}
  Suppose that a finite set $\Sigma$ of places of $F$ contains all
  infinite places.
  Then, the subgroup $\iota(T^\#(\mathcal O_\Sigma))$ of $T(\mathcal
  O_\Sigma)$ has finite index.
\end{tLm}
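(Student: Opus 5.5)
The plan is to use the fact that the isogeny $\iota\colon T^\#\to T$ has a ``reverse'' isogeny, and that $T(\mathcal O_\Sigma)$ is a finitely generated abelian group.

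First, since $Y^\#\subset Y$ has finite index (the bilinear form gives $nY\subset Y^\#$), there is an integer $m\ge1$ with $mY\subset Y^\#$; for instance, $m=n$ works. The inclusion $mY\hookrightarrow Y^\#$ is Galois-equivariant, because $Y^\#$ is Galois-stable ($B$ is Galois-invariant). It therefore induces an isogeny $\kappa\colon T\to T^\#$ of tori defined over $F$. Writing the composite $Y\stackrel{m}\to Y^\#\hookrightarrow Y$ as multiplication by $m$, we get $\iota\circ\kappa=[m]$, the $m$-th power map on $T$.

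Second, I would check integrality. We have $\kappa(T(\mathcal O_\Sigma))\subset T^\#(\mathcal O_\Sigma)$. To see this, take $\mathcal O_\Sigma$-points as the points of $T(F)$ lying in $T(\mathcal O_v)$ for every $v\notin\Sigma$, where $T(\mathcal O_v)$ is the maximal bounded subgroup of $T(F_v)$. A homomorphism of algebraic tori $T(F_v)\to T^\#(F_v)$ is continuous, so it maps the maximal compact subgroup into the maximal compact subgroup. Consequently
\[
  T(\mathcal O_\Sigma)^m=\iota(\kappa(T(\mathcal O_\Sigma)))\subset\iota(T^\#(\mathcal O_\Sigma))\subset T(\mathcal O_\Sigma).
\]
If the paper's $\mathcal O_v$-points are instead taken with respect to an integral model, the same holds at almost every place. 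In that case I would enlarge $\Sigma$ to a finite set $\Sigma'$. This is harmless: $T(\mathcal O_\Sigma)\subset T(\mathcal O_{\Sigma'})$, and both are finitely generated with finite-index intersection compatibility, although this would need a little care. I would first try the maximal-compact description to avoid the issue.

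Third, I would invoke the generalized Dirichlet unit theorem for tori (Shyr, Ono): $T(\mathcal O_\Sigma)$ is a finitely generated abelian group. For a finitely generated abelian group $A$, the subgroup $A^m$ has finite index, since $A/A^m$ is a finitely generated group of exponent $m$, hence finite. Therefore $\iota(T^\#(\mathcal O_\Sigma))$, which contains $T(\mathcal O_\Sigma)^m$, has finite index in $T(\mathcal O_\Sigma)$.

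The main obstacle is the integrality step, namely making $\mathcal O_v$-points of a non-split torus precise and showing that $\kappa$ preserves them. The finite generation itself is a standard cited theorem and not proved here.
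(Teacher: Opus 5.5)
Your proposal is correct and is essentially the paper's proof. Both arguments combine the finite generation of $T(\mathcal O_\Sigma)$, which the paper cites from Platonov--Rapinchuk, with the factorization $T(\mathcal O_\Sigma)\stackrel{n}{\to}T^\#(\mathcal O_\Sigma)\stackrel{\iota}{\to}T(\mathcal O_\Sigma)$ coming from $nY\subset Y^\#\subset Y$, which yields $T(\mathcal O_\Sigma)^n\subset\iota(T^\#(\mathcal O_\Sigma))$.

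Your maximal-bounded-subgroup argument usefully justifies why $T\to T^\#$ preserves $\Sigma$-integral points, a step the paper takes for granted. Keep that route rather than the ``enlarge $\Sigma$'' fallback, since finite index at $\Sigma'$ does not by itself descend to $\Sigma$.
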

\begin{proof}
  The 
  group $T(\mathcal O_\Sigma)$ of $\Sigma$-integer points is known to
  be finitely generated~\cite[Theorem 5.37]{PltRR}.
  Hence, the subgroup 
  $(T(\mathcal O_\Sigma))^n\subset T(\mathcal O_\Sigma)$ has finite
  index.
  The definition of $Y^\#$ implies the inclusions $nY\subset
  Y^\#\subset Y$.
  They induce the maps 
  \[
    T(\mathcal O_\Sigma)\stackrel n\to
    T^\#(\mathcal O_\Sigma)\stackrel\iota\to T(\mathcal O_\Sigma),
  \]
  which ensures the inclusion $(T(\mathcal O_\Sigma))
  ^n\subset\iota(T^\#(\mathcal O_\Sigma))$.
  Therefore, 
  $\iota(T^\#(\mathcal O_\Sigma))$ also has finite index in
  $T(\mathcal O_\Sigma)$.
\end{proof}

To reduce the matter to the integer points, we focus on a certain
relation regarding the local $\mathcal S$-groups, which is indeed the
essential part of the proof of our main theorem.
Recall that at any place $v$ of the number field $F$, the symbol
$
\operatorname Z^\dagger(F_v)\subset T(F_v)$ denotes the image of the
center $\operatorname Z(\widetilde{T(F_v)})$ of the cover
$\widetilde{T(F_v)}
$.

\begin{tPp}
  \label{tModO}
  We fix a finite Galois extension $L/F$ where the torus $T$ splits
  over $L$.
  Let $v\nmid n$ be a finite place of $F$ such that 
  the ramification index $e$ of $L/F$ at $v$ is relatively prime to 
  the degree $n$ of the cover.
  Then we have an inclusion
  \[
    \operatorname Z^\dagger(F_v)\subset
    \iota(T^\#(F_v))\cdot T(\mathcal O_v).
  \]
\end{tPp}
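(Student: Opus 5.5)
The plan is to detect $\operatorname Z^\dagger(F_v)$ through the commutator pairing of the cover and to compare everything through the valuation map to the cocharacter lattice. Fix a place $w$ of $L$ above $v$, with ramification index $e$ and residue degree $f$. Since $T$ splits over $L$, we have $T(L_w)=Y\otimes L_w^\times$, and the valuation $\operatorname{val}_w$ gives a map
\[
\lambda\colon T(F_v)\hookrightarrow T(L_w)\to Y,\qquad y(a)\mapsto \operatorname{val}_w(a)\,y .
\]
Its image lies in $Y^{\Gamma_w}$, where $\Gamma_w$ is the decomposition group. Its kernel is the maximal bounded subgroup of $T(F_v)$.

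The first step is to reduce the statement to a statement about $\lambda$. I want to show that $t\in\operatorname Z^\dagger(F_v)$ forces $\lambda(t)\in\lambda(\iota(T^\#(F_v)))$. Given this, choose $s\in T^\#(F_v)$ with $\lambda(\iota(s))=\lambda(t)$. Then $t\,\iota(s)^{-1}$ lies in $\ker\lambda$, and it remains to see that this kernel is contained in $\iota(T^\#(F_v))\cdot T(\mathcal O_v)$. When $T$ is unramified at $v$ the kernel equals $T(\mathcal O_v)$. In general, $\ker\lambda/T(\mathcal O_v)$ is a finite group killed by a power of $e$. Because $e$ is prime to $n$ and $nY\subset Y^\#$, every element of this quotient is an $n$-th power there. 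Since $t^n=\iota(t')$ for the image $t'$ of $t$ under $n\colon T\to T^\#$, such elements can be absorbed into $\iota(T^\#(F_v))$. This is the first place where the hypothesis $\gcd(e,n)=1$ enters.

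The second step, which is the heart of the argument, is to compute the commutator pairing
\[
[\ ,\ ]\colon T(F_v)\times T(F_v)\to\mu_n .
\]
By the classification in \Cref{etBil} and functoriality of the Brylinski-Deligne construction under base change to $L_w$, the commutator of $y(a)$ and $y'(b)$ in $\widetilde{T(L_w)}$ equals $(a,b)_w^{B(y,y')}$. On elements coming from $F_v$, the pushout by the Hilbert symbol differs from this by the norm relation
\[
(a,b)_{L_w}=(a,N_{L_w/F_v}b)_{F_v}.
\]
This introduces powers of $e$ and $f$ that must be tracked. Because $v\nmid n$, the Hilbert symbol is the tame symbol. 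So for $t\in T(F_v)$ and a unit element $u\in T(\mathcal O_v)$, the value $[t,u]$ is the reduction of $u$ raised to a power governed by $B(\lambda(t),\cdot)$, composed with the $(q-1)/n$-th power map on the residue field. Letting $u$ range over $T(\mathcal O_v)$, and using that the residue-field points surject onto enough of $Y\otimes k_w^\times$ after taking $\Gamma_w$-invariants, the condition $t\in\operatorname Z^\dagger(F_v)$ should yield $B(\lambda(t),Y)\subset n\mathbb Z$ up to a factor of $e$. Since $e$ is invertible mod $n$, this gives $\lambda(t)\in Y^\#$.

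The final step is to check that $\lambda(t)\in Y^\#\cap\lambda(T(F_v))$ actually lies in $\lambda(\iota(T^\#(F_v)))$. This again uses that the cokernel of $\lambda$ on $T^\#(F_v)$ versus $T(F_v)$ differs only by $e$-torsion phenomena coming from ramification, which are invisible modulo $n$.

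I expect the main obstacle to be the second step. For a ramified, non-split $T$, the unit group $T(\mathcal O_v)$ may be too small to pair nondegenerately against all of $Y$ modulo $n$, and the exact power of the norm twisting the Hilbert symbol must be pinned down carefully. My first attempt will restrict to the maximal $F_v$-split subtorus and the induced tori $\operatorname{Res}_{L_w/F_v}$, where the computation is explicit, and then argue by the $\gcd(e,n)=1$ coprimality to pass from these to $T$.
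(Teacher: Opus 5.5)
The paper gives no argument for this proposition; it quotes the author's earlier work \cite{Ny_LcS}. So your plan has to stand on its own, and its decisive Step~2 does not work as proposed.

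You want to obtain the commutator pairing of $\widetilde{T(F_v)}$ from that of $\widetilde{T(L_w)}$. For $t,u\in T(F_v)$, the $\operatorname K_2(F_v)$-valued commutator $c$ maps under restriction to the $\operatorname K_2(L_w)$-valued one. Writing $h_{F_v}$ and $h_{L_w}$ for the $n$-th Hilbert symbols, one has $h_{L_w}\circ\mathrm{res}=h_{F_v}\circ N_{L_w/F_v}\circ\mathrm{res}=h_{F_v}^{[L_w:F_v]}$. So the computation in $\widetilde{T(L_w)}$ only produces $[t,u]^{ef}$, and it constrains $B(\lambda(t),\cdot)$ modulo $n$ only up to the factor $ef$, not $e$. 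The hypothesis makes $e$ invertible modulo $n$ but says nothing about $f$.

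A concrete instance: let $v$ be odd, $E/F_v$ the unramified quadratic extension, $T=\operatorname{Res}_{E/F_v}\mathbb G_m$, $n=2$ and $Q(x_1,x_2)=x_1x_2$. Then $e=1$, $Y^\#=2Y$ and $\lambda(T(F_v))=\mathbb Z(1,1)$. Moreover $\iota(T^\#(F_v))\cdot T(\mathcal O_v)$ is the set of elements of $E^\times$ of even valuation, so the proposition asserts, for example, that a uniformizer of $F_v$ is not central. Yet the commutator of the images in $\widetilde{T(E)}$ is $[t,u]^2=1$ for all $t,u$. Your Step~2 therefore yields no constraint at all in this case.

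The fallback you mention does not repair this. The example above is already an induced torus. For a general $T$, the coprimality of $e$ and $n$ gives no way to transport centrality along the isogenies relating $T$ to split and induced tori: their degrees, and the cokernels on $F_v$-points, can involve primes dividing $n$ through $f$, and $Q$ need not respect such a decomposition. Your claim that unit points surject onto ``enough of'' $(Y\otimes k_w^\times)^{\Gamma_w}$ is also unproved, and wild ramification at $v$ is not excluded by the hypotheses.

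What is missing is a computation of the commutator intrinsic to $F_v$. One option is to work directly with the $\operatorname K_2(F_v)$-valued commutator and the tame symbol of $F_v$. Another is a Kummer-theoretic cup-product description on $H^1(F_v,Y\otimes\mu_n)$ combined with local Tate duality, where $\gcd(e,n)=1$ enters because inertia acts on $Y/nY$ through a group of order prime to $n$.

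By contrast, Steps~1 and~3 are essentially sound. Step~3 can be made rigorous as follows. Write $\lambda=N_I\circ\kappa$, with $\kappa$ the Kottwitz map and $I$ the inertia group of order $e$. Since $Y/Y^\#$ is killed by $n$, one has $H_1(I,Y/Y^\#)=0$ and $N_I$ is bijective on $(Y/Y^\#)_I$. This gives $Y^\#\cap\lambda(T(F_v))=\lambda(\iota(T^\#(F_v)))$.
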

\begin{proof}
  This was proved in the previous work~\cite{Ny_LcS} of the author.
\end{proof}

In the proof of our main theorem, we apply the following.
\begin{tFc}
  \label{tCln}
  Let $T$ be a torus over a number field $F$.
  If $\Sigma$ is a sufficiently large finite set of places of $F$,
  then 
  \[
    T(\mathbb A_\Sigma)=T(F)\cdot\prod_{v\notin \Sigma}T(\mathcal O_v)
    .
  \]
\end{tFc}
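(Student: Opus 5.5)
The plan is to reduce the statement to the finiteness of the class group of the torus. The natural object is
\[
  \operatorname{Cl}_\Sigma(T)=T(\mathbb A_\Sigma)\Big/\Bigl(T(F)\cdot\prod_{v\notin\Sigma}T(\mathcal O_v)\Bigr),
\]
where $T(F)$ acts through its image under the projection $T(\mathbb A)\to T(\mathbb A_\Sigma)$. I will show that this quotient is finite for \emph{every} finite set $\Sigma$ containing the infinite places. Once this holds, it holds in particular for all sufficiently large $\Sigma$, which is exactly the assertion of \Cref{tCln}.

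First I fix a finite Galois extension $L/F$ splitting $T$.

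\begin{enumerate}
\item \textbf{The integral-point quotient is compact.} Each $T(\mathcal O_v)$ is compact. So $\prod_{v\notin\Sigma}T(\mathcal O_v)$ is an open compact subgroup of $T(\mathbb A_\Sigma)$, and the quotient $T(\mathbb A_\Sigma)/\prod_{v\notin\Sigma}T(\mathcal O_v)$ is discrete. It therefore suffices to show that $T(F)$ has finitely many orbits on this discrete quotient.

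\item \textbf{Reduction via weak approximation is unavailable, so use the norm to the split torus.} The split torus $T_L=T\times_FL$ is isomorphic to $\mathbb G_m^r$. Over $L$, the analogous quotient is the $\Sigma_L$-class group of $L$ to the power $r$, which is finite by the finiteness of the ideal class group. The tools linking $T$ to $T_L$ are the inclusion $T(\mathbb A_F)\subset T(\mathbb A_L)$ and the norm map $N\colon T(\mathbb A_L)\to T(\mathbb A_F)$, obtained from the Weil restriction $\operatorname{R}_{L/F}T_L\to T$. The composite $T(\mathbb A_F)\to T(\mathbb A_L)\to T(\mathbb A_F)$ is multiplication by $d=[L:F]$.

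\item \textbf{Conclude finiteness.} Let $x$ be an element of $\operatorname{Cl}_\Sigma(T)$. Its image in $\operatorname{Cl}_{\Sigma_L}(T_L)$ lands in a finite group. The norm map sends $T(L)$ into $T(F)$ and integral points into integral points. Hence $x^d$ lies in the image of a finite group under the norm, so $\operatorname{Cl}_\Sigma(T)/\operatorname{Cl}_\Sigma(T)^d$ surjects onto nothing large. More precisely, $d\cdot\operatorname{Cl}_\Sigma(T)$ is finite.

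\item \textbf{Remaining finiteness of the $d$-torsion part.} It remains to show that $\operatorname{Cl}_\Sigma(T)$ itself is finite, that is, that its $d$-torsion is also controlled. I would show that $\operatorname{Cl}_\Sigma(T)$ is finitely generated. It is generated by the classes of local uniformizer cocharacter images at finitely many places representing the ideal classes of $L$, together with the finite group $T(F_v)/T(\mathcal O_v)$ for ramified $v$. At unramified $v\notin\Sigma$, the group $T(F_v)/T(\mathcal O_v)$ is isomorphic to $Y^{\operatorname{Frob}_v}$, which is finitely generated. One then uses Chebotarev density to show that finitely many places suffice, modulo $T(F)$.
\end{enumerate}

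A finitely generated abelian group $A$ with $dA$ finite is finite. This gives the finiteness of $\operatorname{Cl}_\Sigma(T)$, hence the claimed equality for all large $\Sigma$.

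The main obstacle is the generation step via Chebotarev. If it proves awkward, I would instead simply cite Ono's (or Platonov--Rapinchuk's) theorem on the finiteness of the class number of an algebraic torus: $T(\mathbb A_F)=\bigsqcup_{i=1}^h T(F)\,x_i\,T(\mathbb A_\infty)\prod_{v<\infty}T(\mathcal O_v)$. I would then enlarge $\Sigma$ to contain the finitely many places where the representatives $x_i$ are non-integral. Projecting to $T(\mathbb A_\Sigma)$ then gives the equality directly. This is likely the intended route.
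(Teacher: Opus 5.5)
\textbf{Verdict.} Your last paragraph is a complete proof, and it is the paper's argument. The paper cites the finiteness of the class number of $T$ \cite[Theorem 5.8]{PltRR} to get a finite set $C$ with $T(\mathbb A)=T(F)\cdot C\cdot\prod_{v\mid\infty}T(F_v)\cdot\prod_{v<\infty}T(\mathcal O_v)$. It then takes $\Sigma$ large enough that $C\subset\prod_{v\in\Sigma}T(F_v)\cdot\prod_{v\notin\Sigma}T(\mathcal O_v)$, and projects to $T(\mathbb A_\Sigma)$. You should present that paragraph as the proof, because the main route in steps 1--4 does not prove the statement.

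\textbf{The gap in steps 1--4.} The basic problem is a misreading of the claim. \Cref{tCln} asserts that $\operatorname{Cl}_\Sigma(T)$ is \emph{trivial} for large $\Sigma$, not merely finite, and finiteness for every $\Sigma$ does not imply triviality. Take $T=\mathbb G_m$ and $\Sigma$ the archimedean places: then $\operatorname{Cl}_\Sigma(T)$ is the ideal class group of $F$, which is finite but in general nontrivial, and the equality fails for that $\Sigma$. So ``finite for every $\Sigma$, hence the claimed equality for all large $\Sigma$'' is a non sequitur.

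\textbf{The missing bridge.} What you need is exactly the enlargement step. For $\Sigma\supseteq\Sigma_0$, the projection $T(\mathbb A_{\Sigma_0})\to T(\mathbb A_\Sigma)$ is surjective and carries $T(F)\prod_{v\notin\Sigma_0}T(\mathcal O_v)$ into $T(F)\prod_{v\notin\Sigma}T(\mathcal O_v)$. Hence it induces a surjection $\operatorname{Cl}_{\Sigma_0}(T)\to\operatorname{Cl}_\Sigma(T)$. Once $\Sigma$ also contains the finitely many places where chosen representatives of $\operatorname{Cl}_{\Sigma_0}(T)$ are non-integral, their images are trivial. Therefore $\operatorname{Cl}_\Sigma(T)=1$.

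\textbf{Step 4 does not establish finiteness either.} First, $T(F_v)/T(\mathcal O_v)$ is not finite at ramified places in general. For $T=\operatorname{R}_{E/F}\mathbb G_m$ with $E/F$ quadratic and ramified at $v$, it is $E_w^\times/\mathcal O_{E_w}^\times\cong\mathbb Z$. Second, the claim that Chebotarev density shows finitely many places generate $\operatorname{Cl}_\Sigma(T)$ modulo $T(F)$ is essentially the full content of Ono's finiteness theorem, not a routine density argument.

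Since you must cite that theorem in the end anyway, steps 1--4 (including the norm argument of step 3, which at best bounds $d\cdot\operatorname{Cl}_\Sigma(T)$) can simply be dropped.
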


\begin{proof}
  The class number of a torus $T$ is known to be finite~\cite[Theorem
  5.8]{PltRR}.
  Thus, we have a finite subset $C
  \subset T(\mathbb A)$ such that 
  \[
    T(\mathbb A)=T(F)\cdot C\cdot\prod_{v\colon\text{infinite}}T(F_v)
    \cdot\prod_{v\colon\text{finite}}T(\mathcal O_v).
  \]
  Since $C$ is finite, we find a finite set $\Sigma$ of places such
  that 
  $C\subset\prod_{v\in\Sigma}T(F_v)
  \cdot\prod_{v\notin\Sigma}T(\mathcal O_v)$.
  Hence,
  \[
    T(\mathbb A)=T(F)\cdot\prod_{v\in\Sigma}T(F_v)
    \cdot\prod_{v\notin\Sigma}T(\mathcal O_v),
  \]
  whose $\mathbb A_\Sigma$-part is the desired equality.
\end{proof}

Now we can prove the main theorem:
\begin{tTh}
  \label{tMt}
  Let $F$ be an algebraic number field
  containing a full set $\mu_n$ of $n$-th roots of unity,
  $\mathbb A
  $ the adele ring,
  and $T$ an algebraic torus defined over $F$.
  Let $\widetilde{T(\mathbb A)}\to T(\mathbb A)$ be a 
  Brylinski-Deligne covering group of the adelic torus.
  Then, the group $\mathcal S_{\widetilde{T(\mathbb A)}}$ has finite
  order.
\end{tTh}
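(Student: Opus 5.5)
The plan is to map $\mathcal S_{\widetilde{T(\mathbb A)}}$ onto, or embed it into, a quotient of $\Sigma$-integer points, where \Cref{tFg} gives finiteness. Fix a finite Galois extension $L/F$ splitting $T$. Choose a finite set $\Sigma$ of places that contains all infinite places, all places dividing $n$, and all places where $L/F$ ramifies (so $e=1$ outside $\Sigma$). Enlarge $\Sigma$ if necessary so that \Cref{tCln} holds for both $T$ and $T^\#$. Then \Cref{tModO} applies at every $v\notin\Sigma$ and gives $\operatorname Z^\dagger(F_v)\subset\iota(T^\#(F_v))\cdot T(\mathcal O_v)$.

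The key step is to show that every element of $\operatorname Z^\dagger(\mathbb A)\cap T(F)$ is congruent, modulo $\iota(T^\#(\mathbb A))\cap T(F)$, to an element of $T(\mathcal O_\Sigma)\cap\operatorname Z^\dagger(\mathbb A)$.

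Let $z\in\operatorname Z^\dagger(\mathbb A)\cap T(F)$. For each $v\notin\Sigma$, write $z_v=\iota(t_v)u_v$ with $t_v\in T^\#(F_v)$ and $u_v\in T(\mathcal O_v)$. For almost all $v$ we already have $z_v\in T(\mathcal O_v)$, and we may take $t_v=1$ there. So $t=(t_v)_{v\notin\Sigma}$ lies in $T^\#(\mathbb A_\Sigma)$. By \Cref{tCln} for $T^\#$, write $t=\gamma\cdot w$ with $\gamma\in T^\#(F)$ and $w\in\prod_{v\notin\Sigma}T^\#(\mathcal O_v)$. Now set $z'=\iota(\gamma)^{-1}z\in T(F)$.

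\begin{itemize}
\item Outside $\Sigma$, $z'_v=\iota(w_v)u_v\in T(\mathcal O_v)$, so $z'\in T(\mathcal O_\Sigma)$.
\item Since $\iota(\gamma)\in\iota(T^\#(\mathbb A))\cap T(F)\subset\operatorname Z^\dagger(\mathbb A)\cap T(F)$, the elements $z'$ and $z$ have the same class in $\mathcal S_{\widetilde{T(\mathbb A)}}$.
\end{itemize}

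Hence the natural map
\[
  \operatorname Z^\dagger(\mathbb A)\cap T(\mathcal O_\Sigma)\to\mathcal S_{\widetilde{T(\mathbb A)}}
\]
is surjective.

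Its kernel contains $\iota(T^\#(\mathcal O_\Sigma))$, because $\iota(T^\#(\mathcal O_\Sigma))\subset\iota(T^\#(\mathbb A))\cap T(F)$. So $\mathcal S_{\widetilde{T(\mathbb A)}}$ is a quotient of
\[
  \bigl(\operatorname Z^\dagger(\mathbb A)\cap T(\mathcal O_\Sigma)\bigr)/\iota(T^\#(\mathcal O_\Sigma)),
\]
which is a subgroup of $T(\mathcal O_\Sigma)/\iota(T^\#(\mathcal O_\Sigma))$. That group is finite by \Cref{tFg}.

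I expect the main obstacle to be the correction step, where the local factors $t_v$ must be assembled into a single global element of $T^\#(F)$. This needs the class number finiteness (\Cref{tCln}) for $T^\#$ rather than for $T$, and it needs the adelic element $t$ to be genuinely restricted, i.e.\ $t_v$ integral for almost all $v$. Both are routine to check, but the choice of $\Sigma$ must accommodate \Cref{tModO} and \Cref{tCln} for $T^\#$ simultaneously. This is possible because each only requires $\Sigma$ to be sufficiently large.
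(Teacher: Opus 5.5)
Your proposal is correct and follows essentially the same route as the paper. You apply \Cref{tModO} outside a large $\Sigma$, use \Cref{tCln} for $T^\#$ to absorb the local $T^\#$-factors into an element of $\iota(T^\#(F))$, and conclude in $T(\mathcal O_\Sigma)/\iota(T^\#(\mathcal O_\Sigma))$ via \Cref{tFg}; your element-wise correction $z'=\iota(\gamma)^{-1}z$ is just an explicit form of the paper's inclusion $T(F)\cap\operatorname Z^\dagger(\mathbb A)\subset(\iota(T^\#(\mathbb A))\cap T(F))\cdot T(\mathcal O_\Sigma)$.
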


\begin{proof}
  Let $\Sigma$ be a sufficiently large finite set of places of $F$,
  so that 
  we may apply \Cref{tCln} and \Cref{tModO} at any place 
  $v\notin\Sigma$.
  Then \Cref{tModO} implies the inclusion
  \[
    \operatorname Z^\dagger(\mathbb A_\Sigma)\subset
    \iota(T^\#(\mathbb A_\Sigma))\cdot 
    \prod_{v\notin\Sigma}T(\mathcal O_v),
  \]
  and \Cref{tCln} implies the equality
  $T^\#(\mathbb A_\Sigma)=
  T^\#(F)\cdot\prod_{v\notin \Sigma}T^\#(\mathcal O_v)$.
  Thus,
  $\operatorname Z^\dagger(\mathbb A_{\Sigma})\subset
  \iota(T^\#(F))\cdot
  \prod_{v\notin \Sigma}T(\mathcal O_v)$, and therefore
  \begin{align*}
    T(F)\cap\operatorname Z^\dagger(\mathbb A)\subset
    T(F)\cap\operatorname Z^\dagger(\mathbb A_\Sigma)
    &\subset\iota(T^\#(F))\cdot T(\mathcal O_\Sigma)\\
    &\subset
    \left(\iota( T^\#(\mathbb A))\cap T(F)\right)\cdot
    T(\mathcal O_\Sigma).
  \end{align*}

  In the quotients
  \[
    ( T(F)\cap\operatorname Z^\dagger(\mathbb A))
    /\iota( T^\#(\mathcal O_\Sigma))
    \subset
    (\iota( T^\#(\mathbb A))\cap T(F)
    )/\iota( T^\#(\mathcal O_\Sigma))
    \ \cdot\ 
    T(\mathcal O_\Sigma)
    /\iota( T^\#(\mathcal O_\Sigma)),
  \]
  the last factor 
  $ T(\mathcal O_\Sigma)/\iota( T^\#(\mathcal O_\Sigma))$
  is finite,
  by \Cref{tFg}.
  Therefore, the 
  group 
  \[
    \mathcal S_{\widetilde{T(\mathbb A)}}=
    \left.(\operatorname{Z}^\dagger(\mathbb A)\cap T(F))
    \middle/(\iota( T^\#(\mathbb A))\cap T(F))\right.
  \]
  also has finite order.
\end{proof}

\section*{Acknowledgements}
The author would like to express gratitude to 
Professor Tamotsu Ikeda for the insightful discussions. 
This work was supported by JSPS KAKENHI Grant Number JP23KJ1298,
and also by the Research Institute for Mathematical
Sciences, an International Joint Usage/Research Center located in
Kyoto University.


\begin{thebibliography}{10}

\bibitem{Bry_Del}
J.-L. Brylinski and P.~Deligne.
\newblock Central extensions of reductive groups by {$\bold K_2$}.
\newblock {\em Publ. Math. Inst. Hautes \'Etudes Sci.}, (94):5--85, 2001.

\bibitem{BpG_SymS}
D.~Bump and D.~Ginzburg.
\newblock Symmetric square {$L$}-functions on {${\rm GL}(r)$}.
\newblock {\em Ann. of Math. (2)}, 136(1):137--205, 1992.

\bibitem{GG_Weiss}
W.~T. Gan and F.~Gao.
\newblock The {L}anglands-{W}eissman program for {B}rylinski-{D}eligne
  extensions.
\newblock {\em Ast\'erisque}, (398):187--275, 2018.
\newblock L-groups and the Langlands program for covering groups.

\bibitem{Kab_TensXr}
A.~C. Kable.
\newblock The tensor product of exceptional representations on the general
  linear group.
\newblock {\em Ann. Sci. \'Ecole Norm. Sup. (4)}, 34(5):741--769, 2001.

\bibitem{KudRall_WeilSieg}
S.~S. Kudla and S.~Rallis.
\newblock On the {W}eil-{S}iegel formula.
\newblock {\em J. Reine Angew. Math.}, 387:1--68, 1988.

\bibitem{Ny_LcS}
Y.~Nakata.
\newblock Local langlands correspondence for covering groups of tori, and the
  packet-indexing groups.
\newblock {\em J. Number Theory}, To appear.

\bibitem{PltRR}
V.~Platonov, A.~Rapinchuk, and I.~Rapinchuk.
\newblock {\em Algebraic groups and number theory. {V}ol. {I}}, volume 205 of
  {\em Cambridge Studies in Advanced Mathematics}.
\newblock Cambridge University Press, Cambridge, second edition, 2023.

\bibitem{Qill_hi}
D.~Quillen.
\newblock Higher algebraic {$K$}-theory. {I}.
\newblock In {\em Algebraic {$K$}-theory, {I}: {H}igher {$K$}-theories ({P}roc.
  {C}onf., {B}attelle {M}emorial {I}nst., {S}eattle, {W}ash., 1972)}, volume
  341 of {\em Lecture Notes in Math.}, pages 85--147. Springer, Berlin-New
  York, 1973.

\bibitem{Take_TwSs}
S.~Takeda.
\newblock The twisted symmetric square {$L$}-function of {$\mathrm{GL}(r)$}.
\newblock {\em Duke Math. J.}, 163(1):175--266, 2014.

\bibitem{Weil_Sieg}
A.~Weil.
\newblock Sur la formule de {S}iegel dans la th\'eorie des groupes classiques.
\newblock {\em Acta Math.}, 113:1--87, 1965.

\bibitem{Weiss_loc_tori}
M.~H. Weissman.
\newblock Metaplectic tori over local fields.
\newblock {\em Pacific J. Math.}, 241(1):169--200, 2009.

\bibitem{Weiss_LG_tori}
M.~H. Weissman.
\newblock Covers of tori over local and global fields.
\newblock {\em Amer. J. Math.}, 138(6):1533--1573, 2016.

\bibitem{Weiss_L}
M.~H. Weissman.
\newblock L-groups and parameters for covering groups.
\newblock {\em Ast\'erisque}, (398):33--186, 2018.
\newblock L-groups and the Langlands program for covering groups.

\end{thebibliography}
\end{document}